\newtheorem{theorem}{Theorem}
\theoremstyle{definition}
\newcommand {\R} {\mathbb{R}}
\def\be{\begin{equation}}
\def\ee{\end{equation}}
\title{A remark on approximation of open sets with regular bounded ones}
\author[Dmitry Vorotnikov]{}
\keywords{approximation of irregular open sets, ascending sequence of sets, analytic boundary}
\subjclass[2010]{41A63, 41A99,  57R12}
\email{mitvorot@mat.uc.pt}
\begin{document}
\maketitle

\centerline{\scshape Dmitry Vorotnikov}
\medskip
{\footnotesize
 \centerline{CMUC, Department of
Mathematics,}
   \centerline{University of Coimbra, Apartado 3008,}
   \centerline{3001-454 Coimbra, Portugal}
} 

\bigskip


\begin{abstract} We show that any open set in $\R^n$ is a union of an ascending sequence of bounded open sets with analytic boundary. This is just a technical result, which is probably known. We believe, however, that it can be useful for studing BVPs on irregular open sets.
\end{abstract}


A boundary of a \emph{domain} (this word means any open set in $\R^n$) is called \emph{analytic} if it is an analytic manifold and the domain is locally located on one side of it.

\begin{theorem} Any domain $\Omega$ is a union of an ascending sequence of bounded domains $\Omega_m$ with analytic boundary. Moreover, $\overline{\Omega}_m\subset \Omega$. \end{theorem}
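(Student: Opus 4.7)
The plan is to mollify a carefully chosen exhausting function on $\R^n$ with a Gaussian kernel so that the mollification becomes real-analytic, and then to carve out $\Omega_m$ as a regular super-level set by means of Sard's theorem.

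Concretely, I would begin with
\[
\varphi(x)=\min\bigl(\operatorname{dist}(x,\R^n\setminus\Omega),\;(1+|x|^2)^{-1}\bigr),
\]
which is $1$-Lipschitz, bounded by $1$, strictly positive on $\Omega$ and zero off $\Omega$, and each super-level set $\{\varphi>t\}$, $t>0$, has compact closure in $\Omega$. Convolve with the heat kernel $G_\delta(x)=(4\pi\delta)^{-n/2}e^{-|x|^2/(4\delta)}$ to produce $\varphi_\delta:=\varphi\ast G_\delta$. Because $G_\delta$ is entire with Gaussian decay and $\varphi$ is bounded, the integral $\int\varphi(y)\,G_\delta(x-y)\,dy$ extends holomorphically to $x\in\mathbb{C}^n$, so $\varphi_\delta$ is real-analytic on $\R^n$; the Lipschitz estimate gives $\|\varphi_\delta-\varphi\|_\infty\le C\sqrt{\delta}$, so $\varphi_\delta\to\varphi$ uniformly on $\R^n$ as $\delta\to 0$.

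Next, exhaust $\Omega$ by compacta $K_m=\{x\in\overline{B(0,m)}:\operatorname{dist}(x,\R^n\setminus\Omega)\ge 1/m\}$ and construct $\Omega_m$ inductively. Given $\Omega_1,\dots,\Omega_{m-1}$ with $\overline{\Omega}_{m-1}$ compact in $\Omega$, put $L_m=K_m\cup\overline{\Omega}_{m-1}$ and $\mu_m:=\min_{L_m}\varphi>0$. Choose $\delta_m$ so small that $\|\varphi_{\delta_m}-\varphi\|_\infty<\mu_m/4$; then $\varphi_{\delta_m}>3\mu_m/4$ on $L_m$, while $\{\varphi_{\delta_m}>\mu_m/2\}\subset\{\varphi>\mu_m/4\}$ is bounded with closure in $\Omega$. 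Sard's theorem applied to the analytic function $\varphi_{\delta_m}$ yields a regular value $c_m\in(\mu_m/2,\,3\mu_m/4)$; set $\Omega_m:=\{\varphi_{\delta_m}>c_m\}$. This set is open, bounded, contains $L_m$, and has closure in $\Omega$; since $c_m$ is a regular value, $\partial\Omega_m=\{\varphi_{\delta_m}=c_m\}$, which by the real-analytic implicit function theorem is an analytic hypersurface, with $\Omega_m$ lying on one side. The sequence is thus ascending ($\overline{\Omega}_{m-1}\subset L_m\subset\Omega_m$), and $\bigcup_m\Omega_m\supset\bigcup_m K_m=\Omega$.

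The two delicate points are (i) arranging that the mollification is actually analytic, which forces the use of the Gaussian instead of a compactly supported bump, and (ii) producing analytic boundaries even when $\partial\Omega$ is arbitrarily wild. The first is standard from the holomorphy of the heat kernel; the second is where things could easily go wrong, but Sard's theorem disposes of it cleanly, providing a whole open interval of admissible thresholds $c_m$ so that we can always land on a regular value.
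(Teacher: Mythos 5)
Your proof is correct, and it takes a genuinely different route from the paper's. The paper argues by reduction: the bounded connected case is quoted from Amick's Lemma~1 (ultimately Kellogg), the general bounded case is assembled from the at most countably many connected components via the diagonal choice $\Omega_m=\bigcup_{l=1}^m\omega_{l,m}$, and the unbounded case is handled by intersecting $\Omega$ with balls of radius $m$ and recursively selecting indices so that each new set contains both $\overline{\Omega}_{m-1}$ and the $1/m$-inner core of $\Omega\cap B(0,m)$ --- the role played by your compacta $K_m$. You instead give a single self-contained construction covering all cases at once: a truncated distance function, Gaussian mollification to obtain genuine real-analyticity (a compactly supported mollifier would only give $C^\infty$), and Sard's theorem to select a regular level $c_m$, whence $\partial\Omega_m=\{\varphi_{\delta_m}=c_m\}$ is an analytic hypersurface with $\Omega_m$ locally on one side. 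Your version buys independence from the cited lemma and eliminates the component/ball bookkeeping; the paper's version buys an extremely short proof by leaning on a classical result. Two small points to tidy up: under the paper's convention a \emph{domain} is any open set, so it is harmless that your $\Omega_m$ need not be connected; and $L_m$ can be empty for small $m$ (so $\mu_m=\min_{L_m}\varphi$ is undefined there), which you should handle by starting the induction at the first $m$ with $K_m\ne\emptyset$. You might also spell out that $\partial\Omega_m$ coincides with the \emph{full} level set $\{\varphi_{\delta_m}=c_m\}$ precisely because $c_m$ is a regular value, since every point of that level set is then a limit of points from both the super- and sub-level sets.
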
 

\begin{proof} a) If $\Omega$ is bounded and connected, then the statement of the theorem is a direct consequence of \cite[Lemma 1]{amick} (see also \cite[Section XI.14]{kel}).

b) Let $\Omega$ be any bounded domain. Then it is a union of at most countable number of open connected components $\omega_m$. Each of them is a union of an ascending sequence $\omega_{m,k}$ of bounded domains with analytic boundary, and $\overline{\omega}_{m,k}\subset \omega_m$.
Now, the sequence $\Omega_m=\{ \bigcup_{l=1}^m \omega_{l,m}\}$ proves the claim. Observe that for any compact set $V\subset \Omega$ there exists $k=k(V,\Omega,\{\Omega_m\})$ such that $V \subset \Omega_k$. 

c) Let $\Omega$ be an unbounded domain, and let $\Omega(m)$ be the intersections of $\Omega$ with the open balls of radii $m$ centered at the origin. Let $\omega(m)$ denote the set of points $x$ of $\Omega(m)$ such that the distance from $x$ to $\partial \Omega(m)$ is larger than or equal to $1/m$.   Every $\Omega(m)$ is a union of an ascending sequence $\Omega_{m,k}$ of bounded domains with analytic boundary. Then the required sequence $\Omega_m=\Omega_{m,k_m}$ is determined by the recurrence relation 
$$k_1=k(\omega(1), \Omega(1), \{\Omega_{1,k}\}),$$ 
$$k_m=k(\omega(m)\cup \overline\Omega_{m-1}, \Omega(m), \{\Omega_{m,k}\}).$$ 
\end{proof}

\end{document}